\theoremstyle{plain}
  \newtheorem{thm}{Theorem}
  \newtheorem{lem}[thm]{Lemma}
\theoremstyle{definition}
  \newtheorem{defn}[thm]{Definition}
  \newtheorem{rmk}[thm]{Remark}
  \newtheorem*{ack*}{Acknowledgement}
  \newtheorem*{ques*}{Question}
\theoremstyle{plain}
\numberwithin{equation}{section}
\newcommand\ip[2]{\langle{#1},{#2}\rangle}
\newcommand\BR{\mathbb{R}}
\newcommand\RO{\mathrm{O}}
\newcommand\II{\mathrm{II}}
\newcommand\be{\mathbf{e}}
\newcommand\bff{\mathbf{f}}
\newcommand\bfI{\mathbf{I}}
\newcommand\Sm{\Sigma}
\newcommand\Dt{\Delta}
\newcommand\Om{\Omega}
\newcommand\af{\alpha}
\newcommand\ld{\lambda}
\newcommand\dt{\delta}
\newcommand\sm{\sigma}
\newcommand\vep{\varepsilon}
\newcommand\pl{\partial}
\newcommand\dd{\mathrm{d}}
\newcommand\dv{\mathrm{dvol}}
\newcommand\bx{\bar{x}}
\newcommand\by{\bar{y}}
\DeclareMathOperator{\vol}{Vol}
\DeclareMathOperator{\tr}{tr}
\DeclareMathOperator{\spn}{span}
\begin{document}

\title{An Isoperimetric-Type Inequality for Spacelike Submanifold in the Minkowski Space}

\author{Chung-Jun Tsai}
\address{Department of Mathematics\\National Taiwan University\\Taipei 10617\\ Taiwan}
\email{cjtsai@ntu.edu.tw}

\author{Kai-Hsiang Wang}
\address{National Center for Theoretical Sciences, Mathematics Division\\National Taiwan University\\Taipei 10617\\ Taiwan}
\email{akoujp@gmail.com}

\thanks{This research is supported in part by Taiwan Ministry of Science and Technology grants 108-2636-M-002-009, 109-2636-M-002-007 and National Center for Theoretical Science.}

\begin{abstract}
We prove an isoperimetric-type inequality for maximal, spacelike submanifold in the Minkowski space.  The argument is based on the recent work of Brendle.
\end{abstract}

\maketitle


\section{Introduction}

The isoperimetric inequality asserts that there exists a constant $c_n$ such that
\begin{align}
\vol(\Sm)^{n-1} \leq c_n \vol(\pl\Sm)^n
\label{E_isop} \end{align}
for any compact, $n$-dimensional, minimal submanifold $\Sm$ in the Euclidean space, $\BR^{n+m}$.  The sharp constant of $c_n$ is believed to be
\begin{align}
\frac{\vol(B^n)^{n-1}}{\vol(\pl B^n)^n} = \frac{1}{n^n\vol(B^n)}
\label{E_const} \end{align}
where $B^n$ is the unit ball in $\BR^n$.  The sharp inequality is known to be true in the full dimensional case ($m=0$), and the surface case ($n=2$) with some topological condition.  We refer to \cite[Problem 109]{ref_Yau} and \cite{ref_Ch} for the developments.  Recently, Brendle \cite{ref_Brendle} generalizes brilliantly the argument of \cite{ref_Ca}, and proves the sharp inequality for codimension at most $2$ ($m\leq2$).

The purpose of this note is to investigate the isoperimetric inequality for spacelike submanifolds in the Minkowski space based on the approach of Brendle.  What follows are the main results.

\begin{thm} \label{mthm1}
For any $k\geq 2$ and $\tau\in[1,\infty)$, suppose that $\Sm$ is a compact, connected, $n$-dimensional, spacelike submanifold in $\BR^{n,k}$ whose space-time slope is no greater than $\tau$.  Then, it satisfies
\begin{align*}
\vol(\Sm)^{n-1} &\leq c(n,k,\tau) \left( \vol(\pl\Sm) + \int_\Sm\sqrt{-|H|^2}\dv \right)^{n} ~,
\end{align*}
where $H$ is the (timelike) mean curvature vector of $\Sm$, and
\begin{align*}
c(n,k,\tau) &= \frac{n+k-2}{n}(\tau + \sqrt{\tau^2-1})^{n+k-2}\frac{1}{n^n \vol(B^n)} ~.
\end{align*}
\end{thm}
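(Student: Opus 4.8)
The plan is to run Brendle's Alexandrov–Bakelman–Pucci transport scheme, adapted to the causal structure of $\BR^{n,k}$. Write $\ip{\cdot}{\cdot}$ for the Minkowski inner product and $\pi\colon\BR^{n,k}\to\BR^n$ for the projection onto the spacelike factor. Since $\Sm$ is spacelike, its induced metric $g$ is positive definite and $\ip{\cdot}{\cdot}$ is negative definite on the normal bundle; let $g^\perp:=-\ip{\cdot}{\cdot}|_{T^\perp\Sm}$ be the resulting Riemannian metric on $T^\perp\Sm$, so that $\sqrt{-|H|^2}=|H|_{g^\perp}$. Put $T:=\vol(\pl\Sm)+\int_\Sm\sqrt{-|H|^2}\,\dv$ and let $u$ solve the Neumann problem
\[
\Delta_\Sm u=\frac{T}{\vol(\Sm)}-\sqrt{-|H|^2}\ \text{ in }\Sm,\qquad \frac{\pl u}{\pl\nu}=1\ \text{ on }\pl\Sm ,
\]
which is solvable since the divergence theorem gives exactly the compatibility condition. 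The point of this right–hand side is that once the mean–curvature contribution to the transport Jacobian is bounded by the reverse Cauchy–Schwarz inequality in the negative–definite bundle $(T^\perp\Sm,\ip{\cdot}{\cdot})$, the Laplacian term absorbs the error and the trace becomes the constant $T/\vol(\Sm)$. (To circumvent the mild irregularity of $\sqrt{-|H|^2}$ where $H$ vanishes, I would first replace it by a smooth majorant and pass to the limit at the end.)

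Next I build the transport map. Because $T^\perp_x\Sm$ is negative definite it meets the spacelike plane $\BR^n\times\{0\}$ only at the origin, so for each $x$ there is a unique $y_0(x)\in T^\perp_x\Sm$ with $\nabla^\Sm u(x)+y_0(x)\in\BR^n\times\{0\}$; set $\psi\colon\Sm\to\BR^n$, $\psi(x):=\nabla^\Sm u(x)+y_0(x)$. For $\xi\in\BR^n$ with $|\xi|<\tau^{-1}$ consider $w_\xi(x):=u(x)-\ip{\xi}{x}=u(x)-\langle\xi,\pi(x)\rangle_{\BR^n}$ on $\Sm$: at a boundary point $p$ with outward unit conormal $\nu\in T_p\Sm$ one has $\tfrac{\pl}{\pl\nu}w_\xi(p)=1-\langle\xi,\pi(\nu)\rangle_{\BR^n}$, and the space–time slope hypothesis controls the tilt of $\Sm$—a $g$–unit tangent vector projects under $\pi$ to a Euclidean vector of length at most $\tau$—so $\tfrac{\pl}{\pl\nu}w_\xi(p)\ge 1-\tau|\xi|>0$. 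Hence $w_\xi$ attains its minimum at an interior point $x_0$, where $\nabla^\Sm u(x_0)$ is the tangential part of $\xi$, $y_0(x_0)$ its normal part, $\psi(x_0)=\xi$, and the second–order condition gives $M(x_0)\ge 0$ for the $g$–self–adjoint operator $M$ with $g(Mv,w)=(\mathrm{Hess}_\Sm u)(v,w)-\ip{h(v,w)}{y_0}$. Thus $\psi$ carries $\{M\ge 0,\ |y_0|_{g^\perp}<1,\ |\psi|<\tau^{-1}\}$ onto a set containing the open ball $B^n_{1/\tau}$.

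To finish, differentiate $\psi$: the tangential part of $D\psi(v)$ is exactly $Mv$ and the remaining part is a map $N\colon T_x\Sm\to T^\perp_x\Sm$, and since $\psi$ is valued in the spacelike plane $\BR^n\times\{0\}$—on which the Minkowski and Euclidean structures coincide—the Euclidean pullback metric is $\psi^\ast\delta=g(M\cdot,M\cdot)-g^\perp(N\cdot,N\cdot)\le g(M\cdot,M\cdot)$, so the Jacobian obeys $J\psi\le|\det M|$. On the set above $M\ge 0$, whence $\det M\le(\tfrac1n\tr M)^n$ by the arithmetic–geometric mean inequality, and $\tr M=\Delta_\Sm u-\ip{y_0}{H}\le\Delta_\Sm u+|y_0|_{g^\perp}\sqrt{-|H|^2}<\Delta_\Sm u+\sqrt{-|H|^2}=T/\vol(\Sm)$—the slope bound being exactly what forces $|y_0|_{g^\perp}<1$ at the points that arise. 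The area formula then gives
\[
\tau^{-n}\vol(B^n)=\vol(B^n_{1/\tau})\le\int_\Sm J\psi\,\dv\le\Big(\frac{T}{n\,\vol(\Sm)}\Big)^n\vol(\Sm),
\]
i.e. $\vol(\Sm)^{n-1}\le\tfrac{\tau^n}{n^n\vol(B^n)}\,T^n$; since $\tau\ge 1$ and $k\ge 2$ one has $\tau^n\le\tfrac{n+k-2}{n}(\tau+\sqrt{\tau^2-1})^{n+k-2}$, and the theorem follows (with, in this formulation, the possibly smaller constant $\tfrac{\tau^n}{n^n\vol(B^n)}$).

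I expect two delicate points. The first is the Jacobian step in the indefinite setting: one must check that the extra term really only shrinks the pulled–back Euclidean length, that is $g^\perp(N\cdot,N\cdot)\le g(M\cdot,M\cdot)$ automatically (because $\psi^\ast\delta$ is a genuine nonnegative form), and—if one is forced to work instead with the full $(n+k)$–dimensional transport map into $\BR^{n,k}$—to carry the entire $y$–dependence of $\det(\mathrm{Hess}_\Sm u-\langle y,h\rangle)$ through a fibre integral over $T^\perp_x\Sm$; this is exactly where Brendle's codimension restriction $m\le 2$ reappears and where the precise factor $\tfrac{n+k-2}{n}(\tau+\sqrt{\tau^2-1})^{n+k-2}$ would be pinned down (note $k\ge 2$ is already needed for the numerical comparison above). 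The second is the bookkeeping of the slope: one must verify that every normal vector $y_0$ entering the estimate has $g^\perp$–norm $<1$—using both spacelikeness and the quantitative tilt bound—and that the boundary barrier genuinely excludes all of $\pl\Sm$; the low regularity of $\sqrt{-|H|^2}$ is dealt with by the smoothing already mentioned.
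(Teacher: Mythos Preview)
Your argument is correct and in fact yields a sharper result than the paper's stated theorem: you obtain the constant $\tau^n/(n^n\vol(B^n))$, which is independent of $k$ and is dominated by $c(n,k,\tau)$ for all $k\ge 2$ and $\tau\ge 1$. The two delicate points you flag are not obstacles. For the Jacobian, since $\psi$ takes values in the spacelike slice $\BR^n\times\{0\}$ where the Minkowski and Euclidean forms agree, $\psi^*\delta = g(M\cdot,M\cdot)-g^\perp(N\cdot,N\cdot)$ is automatically positive semidefinite, whence $0\le\psi^*\delta\le g(M\cdot,M\cdot)$ and $J\psi\le|\det M|$ follows from monotonicity of the determinant on psd forms. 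For the slope bookkeeping, at a contact point $x_0$ with $\psi(x_0)=\xi\in B^n_{1/\tau}$ one has $|y_0|_{g^\perp}^2=|\nabla^\Sm u|_g^2-|\xi|^2\le(\tau^2-1)|\xi|^2<1-\tau^{-2}<1$, using $|\nabla^\Sm u|_g=|\pi_{T_{x_0}\Sm}(\xi)|\le\tau|\xi|$ from the slope hypothesis.

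The paper takes a genuinely different route. Instead of collapsing to a single normal correction $y_0(x)$ and mapping into the Euclidean slice $\BR^n$, it uses the full transport map $\Phi:N\Sm\to\BR^{n,k}$, $(x,y)\mapsto\nabla^\Sm u(x)+y$, shows that $\Phi$ covers the region $D=\{c_0-\tau|\pi_s(\xi)|-\sqrt{\tau^2-1}\,|\pi_t(\xi)|>0\}$, and then integrates the characteristic function of a thin timelike shell $\{-\vep^2<|\xi|^2<0\}$ over $D$, passing to the limit $\vep\to0$ by L'H\^{o}pital. The fibre integral over the $k$-dimensional normal space and the light-cone limit are exactly what produce the factor $\tfrac{n+k-2}{n}(\tau+\sqrt{\tau^2-1})^{n+k-2}$. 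Your ``dimensional reduction'' bypasses both the fibre integral and the limiting procedure, which is why you land on the smaller, $k$-free constant $\tau^n$. What the paper's approach buys is uniformity: the same $\Phi$ and the same shell trick carry over verbatim to the mixed-signature normal bundle in Theorem~\ref{mthm2} ($m\ge1$), whereas your choice of a unique $y_0$ exploits the fact that, for $m=0$, the normal bundle is purely timelike and the time projection $\pi_t|_{T^\perp_x\Sm}$ is an isomorphism.
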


Here are some remarks.
\begin{enumerate}
\item The quantity $\tau\geq 1$ measures the deviation of $\Sm$ from belonging to an Euclidean slice.  If there is some tangent direction close to being null, $\tau$ is large.  See Definition \ref{def_nonflat}.
\item For $k=1$, one can embed $\BR^{n,1}$ into $\BR^{n,2}$, and Theorem \ref{mthm1} applies.
\item Suppose that $\Sm$ is an $n$-dimensional domain in $\BR^{n}$.  In this case, $\tau$ can be taken to be $1$.  By embedding $\BR^{n}$ into $\BR^{n,2}$, Theorem \ref{mthm1} recovers the sharp form of \eqref{E_isop} (the full dimension case).
\end{enumerate}

Unlike the Euclidean case, our constant has an additional dependence on $\tau$.  This issue will be addressed in Section \ref{sec_const}.  The observation there illustrates the difficulty to have the dependence only on the dimensions.

\begin{thm} \label{mthm2}
For any $k\geq 2$, $m\geq1$ and $\tau\in[1,\infty)$, suppose that $\Sm$ is a compact, connected, $n$-dimensional, spacelike submanifold in $\BR^{n+m,k}$ whose space-time slope is no greater than $\tau$.  Then, it obeys
\begin{align*}
\vol(\Sm)^{n-1} &\leq c(n,m,k,\tau) \left( \vol(\pl\Sm) + \int_\Sm \left( \frac{1+\sqrt{\tau^4-1}}{\tau}|\pi_s(H)| + \sqrt{\tau^2+1}|\pi_t(H)| \right)\dv \right)^{n} ~.
\end{align*}
Here, $\pi_s$ is the orthogonal projection onto $\BR^{n+m,0}$, $\pi_t$ is the orthogonal projection onto $\BR^{0,k}$, and
\begin{align*}
c(n,m,k,\tau) &= \frac{n+m+k-2}{n+m} \frac{(\tau + \sqrt{\tau^2-1})^{n+m+k-2}(\tau^2+1)^{\frac{k-2}{2}}}{\tau^{m+k-2}}\frac{\vol(B^m)}{n^n\vol(B^{n+m})} ~.
\end{align*}
The convention here is that $|\pi_t(H)|\geq0$.
\end{thm}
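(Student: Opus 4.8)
The plan is to adapt Brendle's proof of the Euclidean isoperimetric inequality (via the ABP-type argument of Cabré, as in \cite{ref_Brendle}) to the spacelike setting, now with the extra codimension $m$ in the spatial directions handled exactly as Brendle handles higher Euclidean codimension. Concretely, I would introduce the quantity $\Lambda$ equal to the right-hand factor inside the power, i.e. the "effective boundary volume" $\vol(\pl\Sm) + \int_\Sm(\cdots)\dv$, and seek a function $u$ on $\Sm$ solving a Neumann problem of the form $\Dt_\Sm u = \frac{n}{\vol(\Sm)}\Lambda - \ip{\nabla u}{H_{\mathrm{eff}}} $ type equation — more precisely the analogue of Brendle's PDE $\mathrm{div}_\Sm(\nabla u) = \beta - \text{(mean-curvature term)}$ with Neumann data $\langle \nabla u,\nu\rangle = 1$ on $\pl\Sm$, where the mean-curvature term is built from $\pi_s(H)$ and $\pi_t(H)$ with the precise weights $\tfrac{1+\sqrt{\tau^4-1}}{\tau}$ and $\sqrt{\tau^2+1}$ that appear in the statement. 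Standard linear elliptic theory on the compact manifold-with-boundary $\Sm$ gives a solution $u\in C^{2,\gamma}$.

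Next I would define the "contact set" $U \subseteq \Sm\times B^m \subseteq \Sm \times \BR^{n+m,k}$ (the product accounting for the spatial codimension, as in Brendle) consisting of points $(x,y)$ where the transport map $\Phi(x,y) = \nabla^{\Sm} u(x) + y$, viewed as a map into the ambient space after the appropriate identification, realizes the minimum of a suitable support function; and then show the image $\Phi(U)$ covers a Euclidean ball of radius governed by $\Lambda/\vol(\Sm)$. The key computation is the pointwise Jacobian bound: on $U$ one has, by the AM–GM inequality applied to the eigenvalues of $D\Phi$ together with the PDE and the constraint coming from the second fundamental form, that $|\det D\Phi| \le \big(\tfrac{\Lambda}{n\,\vol(\Sm)}\big)^n$ times a geometric factor. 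It is precisely here that the spacelike geometry enters: because $\Sm$ is spacelike with space-time slope $\le\tau$, the ambient inner product restricted to directions relevant to $\Phi$ is controlled, and the Cauchy–Schwarz/AM–GM step produces the factors $(\tau+\sqrt{\tau^2-1})$ and $(\tau^2+1)$ and $\tau^{-1}$ — this is where Definition \ref{def_nonflat} and the decomposition $H = \pi_s(H)+\pi_t(H)$ with $|\pi_t(H)|\ge 0$ get used, and where the timelike nature of $H$ (so that $-|H|^2\ge 0$, cf. Theorem \ref{mthm1}) is essential for the integrand to make sense. Integrating $|\det D\Phi|$ over $U$ and comparing with $\vol$ of the covered ball yields
\begin{align*}
\vol(B^n)\Big(\tfrac{\Lambda}{n\,\vol(\Sm)}\Big)^n \cdot (\text{something}) \;\le\; \int_U |\det D\Phi| \;\le\; (\text{geometric factor}) \cdot \vol(\Sm)\,\vol(B^m)\, ,
\end{align*}
and rearranging gives exactly $\vol(\Sm)^{n-1}\le c(n,m,k,\tau)\,\Lambda^n$ with the stated constant.

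The main obstacle I anticipate is the pointwise Jacobian/eigenvalue estimate on the contact set in the Lorentzian setting: in Brendle's Euclidean argument the ambient metric is positive definite, so the trace of the second fundamental form against the (unit spacelike) transport direction is straightforwardly bounded, whereas here the normal bundle of $\Sm$ splits into a spatial part (Riemannian) and a timelike part, and one must carefully track how the slope bound $\tau$ converts a bound on $\langle \text{II}, \text{(direction)}\rangle$ into a bound involving $|\pi_s(H)|$ and $|\pi_t(H)|$ separately — getting the sharp weights $\tfrac{1+\sqrt{\tau^4-1}}{\tau}$ and $\sqrt{\tau^2+1}$ out of this, rather than cruder constants, is the delicate part and is presumably optimized by a judicious choice of the auxiliary parameters (the radius scalings in the $s$- and $t$-directions) in the definition of $\Phi$ and $U$. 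A secondary technical point is verifying that the contact set has the right measure-theoretic properties (the map $\Phi$ is Lipschitz and the area formula applies, $D\Phi \ge 0$ on $U$) exactly as in \cite{ref_Brendle}, which should go through essentially verbatim. Finally, one checks the boundary term: the Neumann condition $\langle\nabla u,\nu\rangle = 1$ ensures $\Phi$ sweeps out the ball, contributing the $\vol(\pl\Sm)$ summand, again following Brendle.
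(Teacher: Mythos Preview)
Your proposal captures the high-level Brendle/Cabr\'e skeleton (Neumann problem, transport map $\Phi$, AM--GM on the Jacobian), but it misses the central new device that makes the argument go through in the Minkowski setting, and as written it would not close.

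The specific gap is in the target of $\Phi$ and how a finite volume is extracted from it. You take the domain to be $\Sm\times B^m$ and assert that $\Phi(U)$ covers ``a Euclidean ball of radius governed by $\Lambda/\vol(\Sm)$''. In the paper, by contrast, $\Phi$ is defined on the \emph{full} normal bundle $N\Sm$, whose fibre is $(m+k)$-dimensional and contains $k$ timelike directions; the image is the unbounded region $D=\{\xi:c_0>\tau|\pi_s(\xi)|+\sqrt{\tau^2-1}\,|\pi_t(\xi)|\}$. No Euclidean ball appears. Because both the fibre and $D$ are non-compact in the timelike directions, integrating $|\det\dd\Phi|$ over the domain against Lebesgue measure on the target gives $+\infty$ on both sides, so the naive inequality is vacuous. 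The paper's remedy---absent from your plan---is to integrate against the characteristic function of the thin shell $S_\vep=\{\xi:-\vep^2<|\xi|^2<0\}$ just inside the null cone, divide by $\vep^2$, and send $\vep\to0$. It is this limiting procedure, not a Cauchy--Schwarz step, that generates the factors $(\tau+\sqrt{\tau^2-1})^{n+m+k-2}$, $(\tau^2+1)^{(k-2)/2}$, and $\tau^{-(m+k-2)}$; the hypothesis $k\ge2$ is used precisely so that the resulting integrand $t^{(k-2)/2}$ is continuous at $t=0$.

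A secondary but related misconception: unlike in Theorem~\ref{mthm1}, here $H$ is not timelike in general, so your appeal to ``the timelike nature of $H$ (so that $-|H|^2\ge0$)'' does not apply. What the paper actually uses to control $-\ip{H}{y}$ is the orthogonal splitting $N_x\Sm=N^+_x\oplus N^-_x$ of Lemma~\ref{lem_linear}(2), which bounds $|\pi_{N^\pm}(H)|$ in terms of $|\pi_s(H)|$, $|\pi_t(H)|$ and $\tau$. Combining those bounds with the restriction $|\pi_{N^+_x}(y)|<c_0/\tau$ in the shell integration is exactly what produces the weights $\tfrac{1+\sqrt{\tau^4-1}}{\tau}$ and $\sqrt{\tau^2+1}$; they are not obtained by optimizing ``auxiliary radius scalings'' in the definition of $\Phi$.
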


\begin{ack*}
Both authors would like to thank Mu-Tao Wang for helpful discussions.
\end{ack*}

\section{Some Linear Algebra}

For the Minkowski space $\BR^{n+m,k}$, denote by $\pi_s$ the projection onto the $\BR^{n+m,0}$-summand, and by $\pi_t$ the projection onto the $\BR^{0,k}$-summand.  For vectors after $\pi_t$, the convention here is to take the \emph{positive definite} inner product.  That is to say, $|v|^2 = |\pi_s(v)|^2 - |\pi_t(v)|^2$.

\begin{defn} \label{def_nonflat}
For a compact, spacelike submanifold $\Sm$ of $\BR^{n+m,k}$, let
\begin{align*}
\tau(x) = \max\{\, |\pi_s(v)| \,:\, v\in T_x\Sm ~, |v| = 1 \,\} \geq 1 ~,
\end{align*}
and let $\tau$ be $\max\{\, \tau(x) \,:\, x\in\Sm \,\}$.
\end{defn}
Since $|\pi_t(v)|^2 = |\pi_s(v)|^2 - |v|^2$, $\max\{\, |\pi_t(v)| \,:\, v\in T_x\Sm ~, |v| = 1 \,\}$ is equal to $\sqrt{(\tau(x))^2 - 1}$.
\begin{rmk}
This notion depends on the choice of the Euclidean slice, $\BR^{n+m,0}$.  In other words, $\tau$ is \emph{not} invariant under the isometries of $\BR^{n+m,k}$.  The infimum of $\tau$ over $\{ A(\Sm) : A\in\RO(n+m,k) \}$ measures the deviation of $\Sm$ from belonging to an Euclidean slice.
\end{rmk}

\begin{lem} \label{lem_linear}
Let $L\subset\BR^{n+m,k}$ be an $n$-dimensional spacelike subspace.  Denote $\max\{\, |\pi_s(v)| \,:\, v\in L ~, |v| = 1 \,\}$ by $\tau(L)$. Then,
\begin{enumerate}
\item $|\pi_L(v)| \leq \tau(L)|\pi_s(v)|+\sqrt{\tau(L)^2-1}|\pi_t(v)|$ for any $v \in \BR^{n+m,k}$;
\item there exists an orthogonal decomposition of $L^\perp$, $N^+\oplus N^-$ such that
\begin{itemize}
\item $N^+$ is spacelike, and $N^-$ is timelike;
\item $|\pi_{N^+}(v)| \leq |\pi_s(v)|$ for any $v \in \BR^{n+m,k}$;
\item $|\pi_{N^-}(v)| \leq \sqrt{\tau(L)^2-1}|\pi_s(v)|+\tau(L)|\pi_t(v)|$ for any $v \in \BR^{n+m,k}$.
\end{itemize}
As mentioned above, the norm on $N^-$ is taken to be the positive definite one.
\end{enumerate}
\end{lem}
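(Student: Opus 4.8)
The plan is to reduce the statement to a singular-value normal form for the spacelike subspace $L$ and then read off the three inequalities from Bessel's inequality and the Cauchy--Schwarz inequality. First I would produce the normal form. Since $L$ is spacelike, the restriction $g_L$ of the Minkowski inner product to $L$ is positive definite; on $L$ I would also consider the positive semidefinite bilinear form $B(v,w):=\ip{\pi_t(v)}{\pi_t(w)}$, using the positive definite inner product on the timelike summand. Diagonalizing $B$ with respect to $g_L$ yields a $g_L$-orthonormal basis $u_1,\dots,u_n$ of $L$ with $B(u_a,u_b)=(\sinh\theta_a)^2\,\delta_{ab}$ for some $\theta_a\ge 0$; after reordering, $\theta_1=\max_a\theta_a$. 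Then $|\pi_t(u_a)|=\sinh\theta_a$ and $|\pi_s(u_a)|^2=g_L(u_a,u_a)+|\pi_t(u_a)|^2=(\cosh\theta_a)^2$, while $g_L(u_a,u_b)=B(u_a,u_b)=0$ for $a\ne b$ forces $\pi_s(u_a)\perp\pi_s(u_b)$ and $\pi_t(u_a)\perp\pi_t(u_b)$. Hence $\bx_a:=\pi_s(u_a)/\cosh\theta_a$ are orthonormal in $\BR^{n+m,0}$, $\by_a:=\pi_t(u_a)/\sinh\theta_a$ (when $\theta_a>0$) are orthonormal in $\BR^{0,k}$, and $u_a=\cosh\theta_a\,\bx_a+\sinh\theta_a\,\by_a$. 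Writing $v=\sum_a c_a u_a$ one has $|v|^2=\sum_a c_a^2$ and $|\pi_s(v)|^2=\sum_a c_a^2(\cosh\theta_a)^2$, so $\tau(L)=\cosh\theta_1$ and $\sqrt{\tau(L)^2-1}=\sinh\theta_1$.

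For part (1) I would split $v=\pi_s(v)+\pi_t(v)$ and use that, $L$ being spacelike, $|\cdot|$ is a genuine norm on $L$, so $|\pi_L(v)|\le|\pi_L(\pi_s(v))|+|\pi_L(\pi_t(v))|$. Since $\{u_a\}$ is $g_L$-orthonormal and $L$ nondegenerate, $\pi_L(w)=\sum_a\ip{w}{u_a}\,u_a$; computing $\ip{\pi_s(v)}{u_a}=\cosh\theta_a\,\ip{\pi_s(v)}{\bx_a}$ and $\ip{\pi_t(v)}{u_a}=-\sinh\theta_a\,\ip{\pi_t(v)}{\by_a}$, Bessel's inequality for $\{\bx_a\}$ and $\{\by_a\}$ gives $|\pi_L(\pi_s(v))|^2=\sum_a(\cosh\theta_a)^2\ip{\pi_s(v)}{\bx_a}^2\le(\cosh\theta_1)^2|\pi_s(v)|^2$ and, likewise, $|\pi_L(\pi_t(v))|^2\le(\sinh\theta_1)^2|\pi_t(v)|^2$. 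Adding the square roots is precisely (1).

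For part (2) I would complete $\bx_1,\dots,\bx_n$ to an orthonormal basis $\bx_1,\dots,\bx_{n+m}$ of $\BR^{n+m,0}$ and, letting $r$ be the number of indices with $\theta_a>0$ (so $r\le k$), complete $\by_1,\dots,\by_r$ to an orthonormal basis $\by_1,\dots,\by_k$ of $\BR^{0,k}$, and set $w_a:=\sinh\theta_a\,\bx_a+\cosh\theta_a\,\by_a$. A direct check shows $w_1,\dots,w_r$, $\bx_{n+1},\dots,\bx_{n+m}$, $\by_{r+1},\dots,\by_k$ are mutually orthogonal, each orthogonal to every $u_b$, with $\ip{w_a}{w_a}=-1$; hence $N^+:=\spn\{\bx_{n+1},\dots,\bx_{n+m}\}$ is spacelike, $N^-:=\spn\{w_1,\dots,w_r,\by_{r+1},\dots,\by_k\}$ is timelike, $N^+\perp N^-$, and $\dim N^++\dim N^-=m+k=\dim L^\perp$, so $L^\perp=N^+\oplus N^-$. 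The estimate $|\pi_{N^+}(v)|^2=\sum_{j>n}\ip{\pi_s(v)}{\bx_j}^2\le|\pi_s(v)|^2$ is immediate. For $N^-$, the listed vectors form a basis orthonormal in the negative-definite (hence, after a sign flip, positive-definite) sense, so $|\pi_{N^-}(v)|^2=\sum_{a\le r}\ip{v}{w_a}^2+\sum_{j>r}\ip{v}{\by_j}^2$; using $\ip{v}{w_a}=\sinh\theta_a\,\ip{\pi_s(v)}{\bx_a}-\cosh\theta_a\,\ip{\pi_t(v)}{\by_a}$, the bound $|\ip{v}{w_a}|\le\sinh\theta_1\,|\ip{\pi_s(v)}{\bx_a}|+\cosh\theta_1\,|\ip{\pi_t(v)}{\by_a}|$, Cauchy--Schwarz, and Bessel --- with $(\cosh\theta_1)^2\ge 1$ used to absorb $\sum_{j>r}\ip{\pi_t(v)}{\by_j}^2$ into $(\cosh\theta_1)^2|\pi_t(v)|^2$ --- gives $|\pi_{N^-}(v)|^2\le\big(\sinh\theta_1\,|\pi_s(v)|+\cosh\theta_1\,|\pi_t(v)|\big)^2$, the last inequality.

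I expect the main obstacle to be the bookkeeping in part (2): pinning down the explicit complements $N^\pm$ --- in particular the ``extra'' timelike directions $\by_{r+1},\dots,\by_k$ that $L$ does not see --- and checking that the cross terms in $|\pi_{N^-}(v)|^2$ close up with exactly the stated constants, where the trivial inequality $(\cosh\theta_1)^2\ge 1$ is what makes the leftover terms fit. Everything else is routine manipulation with the convention $|v|^2=|\pi_s(v)|^2-|\pi_t(v)|^2$.
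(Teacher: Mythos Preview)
Your argument is correct and is essentially the paper's proof in different clothing: the paper obtains the same normal form by writing $L$ as the graph of a linear map $\pi_s(L)\to\BR^{0,k}$ and applying the singular value decomposition (with singular values $\lambda_j=\tanh\theta_j$, so that $\tau(L)=1/\sqrt{1-\lambda_1^2}=\cosh\theta_1$), and then defines $N^+$ and $N^-$ exactly as you do. The only cosmetic difference is that for part~(1) the paper expands $|\pi_L(v)|^2=\sum_j(1-\lambda_j^2)^{-1}(v_j^+-\lambda_j v_j^-)^2$ directly and bounds the cross term by Cauchy--Schwarz, whereas you first split $v=\pi_s(v)+\pi_t(v)$ via the triangle inequality on $L$ and then apply Bessel; both routes land on the same bound.
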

\begin{proof}
Since $L$ is spacelike, $\pi_s|_L$ is injective.  It follows that $H = \pi_s(L)$ is an $n$-dimensional subspace in $\BR^{n+m,0}\subset\BR^{n+m,k}$.  Let $\{\be^+_j\}_{j=n+1}^{n+m}$ be an orthonormal basis for the orthogonal complement of $H$ in $\BR^{n+m,0}$

The subspace $L$ is a graph of a transform, $B$, from $H$ to $\BR^{0,k}\subset\BR^{n+m,k}$.  By using the positive definite metric on $\BR^{0,k}$, and applying the singular value decomposition, there exist orthonormal basis $\{\be^+_j\}_{j=1}^n$ for $H$, orthonormal basis $\{\be^-_{j}\}_{j=1}^k$ for $\BR^{0,k}$, singular values $\{\ld_j\}_{j=1}^n$ such that
\begin{align*}
\{\frac{1}{\sqrt{1-\ld_j^2}}(\be^+_j + \ld_j\be^-_j)\}_{j=1}^n  ~\text{ forms an orthogonal basis for $L$.}
\end{align*}
Note that $\ld_j^2<1$ follows from the spacelike condition of $L$.  When $n>k$, $\ld_j = 0$ for $j>k$.  By re-labeling, assume $|\ld_1|\geq|\ld_j|$ for any $j$.  It is easy to see that $\tau(L) = {1}/{\sqrt{1-\ld_1^2}}$.

For any $v\in\BR^{n+m,k}$, write $v = v^+_\mu\be^+_\mu + v^-_\nu\be^-_\nu$.  Its orthogonal projection onto $L$ is
\begin{align*}
\pi_L(v) &= \frac{1}{{1-\ld_j^2}}(v_j^+ - \ld_j v_j^-)(\be^+_j + \ld_j\be^-_j) ~,
\end{align*}
and thus
\begin{align} \begin{split}
|\pi_L(v)|^2 &= \sum_j\frac{1}{1-\ld_j^2}(v_j^+ - \ld_j v_j^-)^2 \\
&=\sum_j\frac{1}{1-\ld_j^2}\left(({v_j^+})^2+\ld_j^2(v_j^-)^2-2\ld_j v_j^+v_j^-\right)\\
&\leq \frac{1}{1-\ld_1^2}\left(\sum_j(v_j^+)^2\right)+\frac{\ld_1^2}{1-\ld_1^2}\left(\sum_j (v_j^-)^2\right)+2\frac{\ld_1}{1-\ld_1^2}\sqrt{\sum_j (v_j^+)^2}\sqrt{\sum_j (v_j^-)^2} \\
&\leq \tau(L)^2|\pi_s(v)|^2+\left(\left(\tau(L)\right)^2-1\right) \left( |\pi_t(v)|^2\right)+2\tau(L)\sqrt{\tau(L)^2-1}|\pi_s(v)||\pi_t(v)| ~.
\end{split} \label{est1} \end{align}
It proves assertion (1) of this Lemma.

For the second part, let
\begin{align*}
N^+ = \spn\{\be^+_j\}_{j=n+1}^{n+m}  \quad\text{and}\quad  N^- = \spn\{\frac{1}{\sqrt{1-\ld_j^2}}(\ld_j\be^+_j + \be^-_j)\}_{j=1}^k ~.
\end{align*}
Assertion (2) can be obtained by a similar argument as \eqref{est1}.
\end{proof}

\section{The Comparison Map}

The main ingredient of the argument in \cite{ref_Brendle} is a map from the normal bundle of $\Sm$ to the unit ball in $\BR^{n+m}$.  This section is devoted to study analogous properties of the map.

Throughout the rest of this paper, $\Sm$ always denotes a compact, connected, $n$-dimensional, spacelike submanifold in $\BR^{n+m,k}$, with boundary $\pl\Sm$.

For a positive constant $c_0$ and a function $f$ on $\Sm$, let $u$ be the solution to the following Neumann problem:
\begin{align}
\begin{cases}
\Dt^{\Sm} u = c_0(c_f - f)  &\text{on }\Sm ~, \\
\ip{\nabla^\Sm u}{\eta} = c_0  &\text{on }\pl\Sm \\
\end{cases}
\label{Neumann1} \end{align}
where $\eta$ is the unit outer normal of $\pl\Sm$ with respect to $\Sm$, and
\begin{align}
c_f &= \frac{\vol(\pl\Sm) + \int_\Sm f\,\dv}{\vol(\Sm)} ~.
\label{const1} \end{align}
The value of $c_f$ is determined by the Green's identity.  We will use Lipschitz continuous $f$, and thus $u$ will be of class $C^{2,\af}$ for any $\af\in(0,1)$ (see \cite[section 6.7]{ref_GT}).  With this function $u$, introduce the map $\Phi = \Phi_{\Sm,c_0,f}$ as follows:
\begin{align} \begin{array}{cccl}
\Phi: &N\Sm &\to &\BR^{n+m,k} \\
&(x,y) &\mapsto &\nabla^\Sm u(x) + y
\end{array} \label{map1} \end{align}
where $x\in\Sm$ and $y\in N_x\Sm = (T_x\Sm)^\perp$.

With $\tau$ given by Definition \ref{def_nonflat}, consider the following regions:
\begin{align} \begin{split}
D &= \{\, \xi \in \BR^{n+m,k} \,:\, c_0 -  \tau|\pi_s(\xi)| -\sqrt{\tau^2-1} |\pi_t(\xi)|>0 \,\} ~, \\
U &= \{\, x\in\Sm\setminus\pl\Sigma \,:\, |\nabla^\Sm u(x)| < c_0 \,\} ~, \\
\Om &= \{\, (x,y) \,:\, x \in U,\, y\in N_x\Sm,\, \nabla^\Sm u(x)+y \in D \,\} ~, \\
A &= \{\, (x,y)\in \Omega \,:\, \nabla^2_\Sm u(x) - \ip{\II|_x}{y}\geq 0 \,\}
\end{split} \label{region1} \end{align}

\begin{lem} \label{lem_map1}
The image of $A$ under the map $\Phi$ \eqref{map1} is exactly $D$.
\end{lem}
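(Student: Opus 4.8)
The plan is to follow the structure of Brendle's argument, proving the two inclusions $\Phi(A)\subseteq D$ and $D\subseteq\Phi(A)$ separately. The first inclusion is essentially built into the definition of $\Omega$ and $A$: for $(x,y)\in A$ we have $(x,y)\in\Omega$, so $\Phi(x,y)=\nabla^\Sm u(x)+y\in D$ by definition of $\Omega$. So the content is the reverse inclusion $D\subseteq\Phi(A)$.

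For $D\subseteq\Phi(A)$, fix $\xi\in D$. The idea is to pick up $\xi$ as a critical point of an appropriate function on $\Sm$, in the spirit of the selection argument in \cite{ref_Brendle} and \cite{ref_Ca}. First I would consider the function $w\colon\Sm\to\BR$ given by $w(x)=u(x)-\ip{x}{\xi}$ (using the ambient Minkowski inner product between the position vector $x\in\BR^{n+m,k}$ and $\xi$), or more precisely one should subtract the appropriate linear-in-$x$ term so that a minimum of $w$ produces the equation $\nabla^\Sm u(x)=\pi_{T_x\Sm}(\xi)$. At an interior minimum $x$, one gets $\nabla^\Sm u(x)=\pi_{T_x\Sm}(\xi)$ and, from the second-derivative test, $\nabla^2_\Sm u(x)-\ip{\II|_x}{\pi_{N_x\Sm}(\xi)}\geq0$. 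Setting $y=\pi_{N_x\Sm}(\xi)$ gives $\Phi(x,y)=\nabla^\Sm u(x)+y=\pi_{T_x\Sm}(\xi)+\pi_{N_x\Sm}(\xi)=\xi$, which is the desired conclusion provided we can check $(x,y)\in\Omega$ and the positivity condition, i.e.\ that $x\in U$ and $\nabla^\Sm u(x)+y=\xi\in D$ (the latter is automatic).

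The two things that need care are: (a) ensuring that $w$ attains its minimum at an \emph{interior} point of $\Sm$, not on $\pl\Sm$, and (b) verifying $x\in U$, i.e.\ $|\nabla^\Sm u(x)|<c_0$. For (a), one argues by contradiction: if the minimum is attained at a boundary point $x_0\in\pl\Sm$, then the outward normal derivative $\ip{\nabla^\Sm w}{\eta}\leq0$ at $x_0$, which using the Neumann condition $\ip{\nabla^\Sm u}{\eta}=c_0$ gives $c_0\leq\ip{\pi_{T_{x_0}\Sm}(\xi)}{\eta}\leq|\pi_{T_{x_0}\Sm}(\xi)|$. Then Lemma \ref{lem_linear}(1) bounds $|\pi_{T_{x_0}\Sm}(\xi)|\leq\tau|\pi_s(\xi)|+\sqrt{\tau^2-1}|\pi_t(\xi)|$, contradicting $\xi\in D$ (which says precisely $c_0>\tau|\pi_s(\xi)|+\sqrt{\tau^2-1}|\pi_t(\xi)|$). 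For (b), at the interior minimum $\nabla^\Sm u(x)=\pi_{T_x\Sm}(\xi)$, so $|\nabla^\Sm u(x)|=|\pi_{T_x\Sm}(\xi)|\leq\tau|\pi_s(\xi)|+\sqrt{\tau^2-1}|\pi_t(\xi)|<c_0$, again by Lemma \ref{lem_linear}(1) and $\xi\in D$; so indeed $x\in U$ and $(x,y)\in\Omega$, and with the Hessian inequality $(x,y)\in A$.

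The main obstacle is being careful about what ``$\ip{x}{\xi}$'' means as a function on $\Sm$ and making sure the first-order condition at an interior critical point is exactly $\nabla^\Sm u(x)=\pi_{T_x\Sm}(\xi)$ — this requires that the gradient on $\Sm$ of the ambient linear function $x\mapsto\ip{x}{\xi}$ is the tangential projection $\pi_{T_x\Sm}(\xi)$, which holds because $\Sm$ is spacelike so the induced metric is Riemannian and the usual computation goes through. Once the function $w$ is set up correctly, the rest is the soft minimum-selection argument plus two invocations of Lemma \ref{lem_linear}(1); everything hinges on the defining inequality of $D$ being exactly the bound that Lemma \ref{lem_linear}(1) provides for $|\pi_L(\xi)|$ with $L=T_x\Sm$ and $\tau(L)\le\tau$.
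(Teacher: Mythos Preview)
Your proposal is correct and follows essentially the same approach as the paper: define $w(x)=u(x)-\ip{x}{\xi}$, show its minimum is interior, and read off the first- and second-order conditions. The only cosmetic difference is that the paper verifies $\ip{\nabla^\Sm w}{\eta}>0$ on $\pl\Sm$ directly via Definition~\ref{def_nonflat} and Cauchy--Schwarz (rather than by contradiction through Lemma~\ref{lem_linear}(1)), but the content is identical.
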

\begin{proof}
It follows from definition that $\Phi(A)\subset D$.  For surjectivity, define $w_\xi(x) = u(x) - \ip{x}{\xi}$ for any $\xi\in D$.
When $x\in\pl\Sm$, it follows from Definition \ref{def_nonflat} and Cauchy--Schwarz inequality that
\begin{align*}
\ip{\nabla^\Sm w_\xi(x)}{\eta(x)} &= \ip{\nabla^\Sm u(x)}{\eta(x)} - \ip{\xi}{\eta(x)} \\
&= c_0 - \ip{\pi_s(\xi)}{\pi_s(\eta(x))} + \ip{\pi_t(\xi)}{\pi_t(\eta(x))} \\
&\geq c_0 - \tau\,|\pi_s(\xi)| - \sqrt{\tau^2-1}\,|\pi_t(\xi)| > 0 ~.
\end{align*}

Therefore, there is an $x_\xi\in\Sm\setminus\pl\Sm$  attaining the minimum of $w_\xi$.  Since $\nabla^\Sm w_\xi(x_\xi) = 0$, $\nabla^\Sm u(x_\xi) = \xi^T$ where $\xi^T$ is the orthogonal projection of $\xi$ onto $T_x\Sm$.  Equivalently, $\xi = \nabla^\Sm u(x_\xi) + y_\xi$ for some $y_\xi\in N_x\Sm$.  According to (1) of Lemma \ref{lem_linear},
\begin{align*}
|\nabla^\Sm u(x_\xi)| &\leq \tau(x_\xi)|\pi_s(\xi)|+\sqrt{\tau(x_\xi)^2-1}|\pi_t(\xi)|\leq \tau|\pi_s(\xi)|+\sqrt{\tau^2-1}|\pi_t(\xi)| < c_0 ~,
\end{align*}
and hence $x_\xi\in U$.  This shows that $\Phi(\Om) = D$.

Since $x_\xi$ achieves the minimum of $w_\xi$,
\begin{align*}
0 &\leq \nabla^2_\Sm w_\xi(x_\xi) = \nabla^2_\Sm u(x_\xi) - \ip{\II|_{x_\xi}}{\xi} ~.
\end{align*}
Note that $\ip{\II|_{x_\xi}}{\xi} = \ip{\II_{x_\xi}}{\nabla^\Sm u(x_\xi) + y_\xi} = \ip{\II|_{x_\xi}}{y_\xi}$, and thus $(x_\xi,y_\xi)\in A$.
\end{proof}

\subsection{The Jacobian Determinant}

To extract an isoperimetric-type inequality from Lemma \ref{lem_map1}, some estimate on the Jacobian determinant of $\Phi$ is needed.  The properties in this subsection are essentially the same as \cite[Lemma 5 and 6]{ref_Brendle}.  The arguments are included for completeness.

\begin{lem} \label{lem_Jac1}
At any $(x,y)\in\Om$, $(\det \dd\Phi)(x,y) = \det(\nabla^2_\Sm u(x) - \ip{\II|_x}{y})$.
\end{lem}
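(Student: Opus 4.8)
The plan is to compute the differential of $\Phi$ in a convenient local frame and read off the determinant. First I would fix a point $(x_0,y_0)\in\Om$ and choose a local orthonormal frame $\{e_1,\dots,e_n\}$ for $T\Sm$ near $x_0$ that is geodesic at $x_0$ (so $\nabla^\Sm_{e_i}e_j = 0$ at $x_0$), together with a local orthonormal frame $\{\nu_1,\dots,\nu_{m+k}\}$ for the normal bundle $N\Sm$ that is parallel in the normal connection at $x_0$. This trivializes $N\Sm$ near $(x_0,y_0)$ as $\Sm\times\BR^{m+k}$, giving coordinates $(x,y^\alpha)$ with $y = y^\alpha\nu_\alpha$, and the total space $N\Sm$ has dimension $n+(m+k) = n+m+k$, which matches $\dim\BR^{n+m,k}$, so the Jacobian determinant makes sense.

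Next I would differentiate $\Phi(x,y) = \nabla^\Sm u(x) + y^\alpha\nu_\alpha(x)$ in these coordinates. Differentiating in the fiber directions $\pl/\pl y^\alpha$ simply gives $\nu_\alpha$, so the fiber part of $\dd\Phi$ is the inclusion of $N_{x_0}\Sm$ into $\BR^{n+m,k}$. Differentiating in a horizontal direction $e_i$ at $(x_0,y_0)$ gives $\nabla^\Sm_{e_i}(\nabla^\Sm u) + y^\alpha\,\nabla_{e_i}\nu_\alpha$; here I use the ambient (flat) connection on the second term. The first term is $(\nabla^2_\Sm u)(e_i,\cdot)$ expressed via the Hessian, producing the component $\sum_j (\nabla^2_\Sm u)(e_i,e_j)\,e_j$ tangent to $\Sm$ (the normal part of $\nabla_{e_i}(\nabla^\Sm u)$ involves the second fundamental form applied to $\nabla^\Sm u$, but since $(x_0,y_0)\in\Om\subset A$-ambient setup, one bookkeeps it). For the second term, the Weingarten relation gives the tangential part of $\nabla_{e_i}\nu_\alpha$ as $-\sum_j \ip{\II(e_i,e_j)}{\nu_\alpha}\,e_j$ (with the normal part vanishing at $x_0$ by the parallel choice). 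Collecting the $T_{x_0}\Sm$-components of $\dd\Phi(e_i)$ yields exactly $\sum_j\bigl((\nabla^2_\Sm u)(e_i,e_j) - \ip{\II(e_i,e_j)}{y_0}\bigr) e_j$, plus a normal component coming from $\II(e_i,\nabla^\Sm u)$.

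With this block structure, the matrix of $\dd\Phi$ with respect to the basis $\{e_1,\dots,e_n,\nu_1,\dots,\nu_{m+k}\}$ of the target is block lower- (or upper-) triangular: the fiber directions map isomorphically onto the normal frame (identity block), and the tangential-to-tangential block is precisely $\nabla^2_\Sm u(x_0) - \ip{\II|_{x_0}}{y_0}$, while the tangential-to-normal block sits in an off-diagonal position that does not affect the determinant. Hence $\det \dd\Phi = \det\bigl(\nabla^2_\Sm u(x_0) - \ip{\II|_{x_0}}{y_0}\bigr)$, as claimed. The one subtlety to be careful about — and the main thing that needs verification rather than routine computation — is that the target $\BR^{n+m,k}$ carries an indefinite metric, so "determinant of $\dd\Phi$" must be interpreted as the determinant with respect to fixed bases (equivalently, the Jacobian of $\Phi$ viewed as a map of oriented manifolds of equal dimension), not via the metric volume forms; once one commits to the orthonormal-type bases above, the triangular structure makes the identity immediate, and no sign ambiguity from the signature enters because both the domain basis and the triangular block decomposition are chosen compatibly.
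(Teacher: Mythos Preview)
Your proposal is correct and follows essentially the same route as the paper: fix a point, choose a geodesic tangent frame and an orthonormal normal frame, and observe that $\dd\Phi$ is block lower-triangular with tangential block $\nabla^2_\Sm u - \ip{\II}{y}$ and identity on the normal block, whence the determinant formula. The paper additionally splits the normal frame into its spacelike and timelike parts (giving three diagonal blocks rather than two), but this is purely cosmetic; your remark about interpreting $\det\dd\Phi$ via fixed bases rather than metric volume forms is a welcome clarification that the paper leaves implicit.
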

\begin{proof}
Fix a point $(\bx,\by)\in\Om$.  Let $(x_1,\ldots,x_n)$ be a normal coordinate system for $\Sm$ near $\bx$, and let $\{\be_i\}_{i=1}^n$ be a local orthonormal frame for $T\Sm$ near $\bx$ such that $\be_i = \frac{\pl}{\pl x_i}$ at $\bx$ for $i\in\{1,\ldots,n\}$.  Also choose a local orthonormal frame $\{\bff^+_i\}_{i=1}^{m}\cup\{\bff^-_j\}_{j=1}^{k}$ of $N\Sm$ near $\bx$, where $\bff^+_i$'s are spacelike, and $\bff^-_j$'s are timelike.

By writing a normal vector to $\Sm$ as $y = \sum_{i=1}^m y_i\,\bff^+_i + \sum_{j=1}^k y_{m+j}\,\bff^-_j$,
$$(x_1, \ldots, x_n,y_1, \ldots, y_{m}, y_{m+1}, \ldots, y_{m+k})$$
constitutes a local coordinate system on the total space of $N\Sm$.  We compute the differential of $\Phi$ at $(\bx,\by)$:
\begin{align*}
\ip{\frac{\pl\Phi}{\pl x_i}(\bx,\by)}{\be_{j}} &= (\nabla^2_\Sigma u)(\be_i, \be_j) - \ip{\II|_x(\be_i,\be_j)}{\by} ~,
&\ip{\frac{\pl\Phi}{\pl y_i}(\bx,\by)}{\bff^+_j} &= \ip{\bff^+_i}{\bff^+_j} = \dt_{ij} ~, \\
\ip{\frac{\pl\Phi}{\pl y_i}(\bx,\by)}{\be_{j}} &= \ip{\bff^+_i}{\be_j} = 0 ~,
&\ip{\frac{\pl\Phi}{\pl y_{m+i}}(\bx,\by)}{\bff^+_j} &= \ip{\bff^-_i}{\bff^+_j} = 0 ~, \\
\ip{\frac{\pl\Phi}{\pl y_{m+i}}(\bx,\by)}{\be_j} &= \ip{\bff^-_i}{\be_j} = 0 ~,
&\ip{\frac{\pl\Phi}{\pl y_{m+i}}(\bx,\by)}{\bff^-_j} &= \ip{\bff^-_i}{\bff^-_j} = -\dt_{ij} ~.
\end{align*}
It follows that the Jacobian of $\Phi$ at $(\bx,\by)$ in terms of the frame $\{\be_1,\ldots,\be_n,\bff^+_1,\ldots\,\bff^+_m,\bff^-_1,\ldots,\bff^-_k\}$ takes the following form
\begin{align*}
\begin{bmatrix}
\nabla^2_\Sm u(\bx) - \ip{\II|_{\bx}}{\by} & 0 & 0 \\
* & \bfI_m & 0\\
* & * & \bfI_k
\end{bmatrix} ~.
\end{align*}
Taking determinant finishes the proof of this Lemma.
\end{proof}

\begin{lem} \label{lem_Jac2}
At any $(x,y)\in A$, $0 \leq (\det\dd\Phi)(x,y) \leq \left( \frac{1}{n}(c_0 (c_f - f(x)) - \ip{H(x)}{y}) \right)^n$.  In particular, $c_0(c_f - f(x)) - \ip{H(x)}{y} \geq 0$ at any $(x,y)\in A$.
\end{lem}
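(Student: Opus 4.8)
The plan is to read off the claim from Lemma \ref{lem_Jac1} together with the arithmetic--geometric mean inequality. By Lemma \ref{lem_Jac1}, at any $(x,y)\in\Om$ we have $(\det\dd\Phi)(x,y) = \det M$, where $M = \nabla^2_\Sm u(x) - \ip{\II|_x}{y}$ is a symmetric bilinear form on the $n$-dimensional Euclidean space $T_x\Sm$ (both $\nabla^2_\Sm u$ and $\ip{\II|_x}{y}$ are symmetric), hence an honest symmetric $n\times n$ matrix. On the subset $A\subset\Om$, the defining condition is precisely $M\geq 0$, i.e.\ $M$ is positive semidefinite. Its eigenvalues $\mu_1,\dots,\mu_n$ are therefore all nonnegative, which gives at once $(\det\dd\Phi)(x,y) = \det M = \prod_i \mu_i \geq 0$.

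Next I would apply AM--GM to the nonnegative numbers $\mu_1,\dots,\mu_n$, which yields
\begin{align*}
(\det\dd\Phi)(x,y) = \prod_{i=1}^n \mu_i \leq \left( \frac{1}{n}\sum_{i=1}^n \mu_i \right)^n = \left( \frac{\tr M}{n} \right)^n .
\end{align*}
It then remains to compute $\tr M$ in an orthonormal frame of $T_x\Sm$: one has $\tr(\nabla^2_\Sm u(x)) = \Dt^\Sm u(x)$ and $\tr\ip{\II|_x}{y} = \ip{\tr\II|_x}{y} = \ip{H(x)}{y}$ by the very definition of the mean curvature vector $H = \tr\II$. Invoking the interior equation of the Neumann problem \eqref{Neumann1}, $\Dt^\Sm u(x) = c_0(c_f - f(x))$, so $\tr M = c_0(c_f - f(x)) - \ip{H(x)}{y}$. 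Substituting this into the displayed inequality gives the asserted bound $(\det\dd\Phi)(x,y) \leq \bigl( \tfrac{1}{n}(c_0(c_f - f(x)) - \ip{H(x)}{y}) \bigr)^n$, and the final ``in particular'' clause is immediate: since $M\geq 0$ on $A$, its trace is nonnegative, i.e.\ $c_0(c_f - f(x)) - \ip{H(x)}{y} \geq 0$.

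There is no genuine obstacle in this argument; it is essentially bookkeeping combined with one classical inequality. The only point demanding a little care is the sign/metric convention: the pairing $\ip{\cdot}{\cdot}$ on the normal bundle $N_x\Sm$ is the (indefinite) Minkowski form, so the expression $\ip{\II|_x}{y}$ must be interpreted with that convention, and $H$ is the timelike mean curvature vector accordingly. But this never actually intervenes, because we only use the identity $\tr\ip{\II|_x}{y}=\ip{H(x)}{y}$ (a definition) and the positive semidefiniteness of $M$ as a quadratic form on the \emph{spacelike} space $T_x\Sm$, where eigenvalue arguments are unproblematic.
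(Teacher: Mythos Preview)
Your proof is correct and follows essentially the same approach as the paper: invoke Lemma \ref{lem_Jac1}, use positive semidefiniteness on $A$ together with the arithmetic--geometric mean inequality, then identify the trace via \eqref{Neumann1} and the definition of $H$. Your additional remark that the ``in particular'' clause follows from nonnegativity of the trace of a positive semidefinite matrix makes explicit a step the paper leaves implicit.
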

\begin{proof}
For $(x,y)\in A$, $\nabla^2_\Sm u(x) - \ip{\II|_x}{y}$ is positive semi-definite.  By the arithmetic-geometric inequality, 
\begin{align*}
0 \leq \det(\nabla^2_\Sm u(x) - \ip{\II|_x}{y}) \leq \left( \frac{\tr(\nabla^2_\Sm u(x) - \ip{\II|_x}{y})}{n} \right)^n = \left( \frac{\Dt^\Sm u(x) - \ip{H(x)}{y}}{n} \right)^n\ ~.
\end{align*}
Combining this with Lemma \ref{lem_Jac1} and \eqref{Neumann1} finishes the proof of this Lemma.
\end{proof}

\section{Proof of Main Theorems}

This section is devoted to the proof of Theorem \ref{mthm1} and \ref{mthm2}.  It will be done by analyzing the total (usual) Lebesgue measure of $D$ via $\Phi$.  In this section, $k$ is always assumed to be greater than or equal to $2$.

\subsection{Measure Estimate on the Target}

It follows from Lemma \ref{lem_map1} that
\begin{align}
\int_{\xi \in D} \rho(\xi) \,\dd\xi
&\leq \int_{x\in U}\!\int_{\substack{ y\in N_x\Sm \\ y+ \nabla^\Sm u(x)\in D}} (\det \dd\Phi)(x,y)\cdot 1_{A}(x,y)\cdot \rho(y+\nabla^\Sm u(x)) \,\dd y\, \dv_x ~, \label{ineq1}
\end{align}
for any non-negative $\rho\in L^1(D)$, where $1_A$ is the characteristic function of $A$.
The strategy is to choose $\rho$ to be the characteristic function of the set
\begin{align}
S_\vep = \{\xi\in\BR^{n+m,k}: -\vep^2 < |\xi|^2 = |\pi_s(\xi)|^2 - |\pi_t(\xi)|^2 < 0\} ~,
\label{region2} \end{align}
and examine the limiting inequality of \eqref{ineq1} over $\vep^2$ as $\vep\to0$.  For $\vep<\!<1$, it follows from
\begin{align*}
S_\vep\cap D \supset \{\xi: |\pi_s(\xi)|^2 < |\pi_t(\xi)|^2 < |\pi_s(\xi)|^2 + \vep^2 ~,~ |\pi_s(\xi)| < c_0\tau -\sqrt{(\tau^2-1)(c_0^2+\vep^2)} \} 
\end{align*}
that
\begin{align*}
\int_{\xi \in D} 1_{S_\vep}(\xi)\,\dd\xi \geq \vol(B^k)\vol(\pl B^{n+m}) \int_0^{c_0\tau-\sqrt{(\tau^2-1)(c_0^2+\epsilon^2)}} \left((s^2+\epsilon^2)^\frac{k}{2}-s^k\right) s^{n+m-1}\,\dd s
\end{align*}
where $s = |\pi_s(\xi)|$.  By the L'H\^{o}pital's rule,
\begin{align*}
&\quad \lim_{\vep\to0}\frac{1}{\vep^2}\int_0^{c_0\tau-\sqrt{(\tau^2-1)(c_0^2+\epsilon^2)}} \left((s^2+\epsilon^2)^\frac{k}{2}-s^k\right) s^{n+m-1}\,\dd s \\
&= \int_0^{c_0(\tau-\sqrt{\tau^2-1})} \frac{k}{2}s^{k-2}\, s^{n+m-1}\,\dd s = \frac{k}{2(n+m+k-2)} \left(c_0(\tau - \sqrt{\tau^2-1})\right)^{n+m+k-2} ~.
\end{align*}

Putting these together gives
\begin{align} \begin{split}
&\quad \frac{k\,\vol(B^k)\vol(\pl B^{n+m})}{2(n+m+k-2)} \left(c_0(\tau - \sqrt{\tau^2-1})\right)^{n+m+k-2} \\
&\leq \limsup_{\vep\to0}\frac{1}{\vep^2}\int_{x\in U}\!\int_{\substack{ y\in N_x\Sm ~,~ y+ \nabla^\Sm u(x)\in D ~, \\ -\vep^2-|\nabla^\Sm u(x)|^2 < |y|^2 < - |\nabla^\Sm u(x)|^2 }} (\det \dd\Phi)(x,y)\cdot 1_{A}(x,y)\,\dd y\, \dv_x ~.
\end{split} \label{ineq2} \end{align}

\subsection{Proof of Theorem \ref{mthm1}}

Now, suppose that $m = 0$.  Let $f$ be the length of the mean curvature vector of $\Sm$,
\begin{align}
f(x) = \sqrt{-|H|^2} ~.
\end{align}
Since $m=0$, $N\Sm$ consists of only timelike directions, and hence $|\ip{H(x)}{y}|\leq\sqrt{-|H(x)|^2}\sqrt{-|y|^2}$ for any $y\in N_x\Sm$.  According to Lemma \ref{lem_Jac2},
\begin{align*}
(\det \dd\Phi)(x,y)\cdot 1_{A}(x,y) &\leq \left( \frac{c_0(c_f - \sqrt{-|H|^2}) - \ip{H(x)}{y}}{n} \right)^n\cdot 1_{A}(x,y) \\
&\leq n^{-n} { \left(\left( c_0(c_f - \sqrt{-|H|^2}) + \sqrt{-|H|^2}\sqrt{-|y|^2} \right)_+\right)^n }
\end{align*}
where $(\;\cdot\;)_+$ means $\max\{(\;\cdot\;),0\}$.
It follows that the inner integral of \eqref{ineq2} is no greater than
\begin{align*}
&\quad n^{-n}\int_{\substack{ y\in N_x\Sm \\ -\vep^2-|\nabla^\Sm u(x)|^2 < |y|^2 < - |\nabla^\Sm u(x)|^2 }} \left(\left( c_0(c_f - \sqrt{-|H|^2}) + \sqrt{-|H|^2}\sqrt{-|y|^2} \right)_+\right)^n \,\dd y \\
&= \frac{1}{2} n^{-n}\vol(\pl B^k)\int_{|\nabla^\Sm u(x)|^2}^{|\nabla^\Sm u(x)|^2+\vep^2} \left(\left(c_0(c_f - \sqrt{-|H|^2}) + \sqrt{-|H|^2}\,t^{\frac{1}{2}} \right)_+\right)^n \,t^{\frac{k-2}{2}}\,\dd t
\end{align*}
where the parameter $t$ is $-|y|^2$.  Since $k\geq2$, the integrand is continuous for $t\geq0$.  It is clear that the limit of the above expression over $\vep^2$ as $\vep\to0$ is
\begin{align*}
\frac{1}{2} n^{-n}\vol(\pl B^k) \left(\left( c_0(c_f - \sqrt{-|H|^2}) + \sqrt{-|H|^2}\,|\nabla^\Sm u(x)| \right)_+\right)^n \,|\nabla^\Sm u(x)|^{k-2} ~.
\end{align*}

By combining these with \eqref{ineq2} and \eqref{region1},
\begin{align*}
&\quad \frac{k\,\vol(B^k)\vol(\pl B^{n})}{2(n+k-2)} \left(c_0(\tau - \sqrt{\tau^2-1})\right)^{n+k-2} \\
&\leq \frac{1}{2} n^{-n}\vol(\pl B^k) \int_{x\in U} \left(\left( c_0(c_f - \sqrt{-|H|^2}) + \sqrt{-|H|^2}\,|\nabla^\Sm u(x)| \right)_+\right)^n \,|\nabla^\Sm u(x)|^{k-2} \,\dv_x ~. \\
&\leq \frac{1}{2} n^{-n}\vol(\pl B^k)\,(c_0)^{n+k-2}\,(c_f)^{n} \vol(U)
\end{align*}
This together with \eqref{const1} and the fact that $\vol(U)\leq\vol(\Sm)$ finishes the proof of Theorem \ref{mthm1}.

\subsection{Proof of Theorem \ref{mthm2}}

When $m\geq1$, let
\begin{align}
f(x) &= \left(\frac{1}{\tau}+\sqrt{\tau^2-\frac{1}{\tau^2}}\right)|\pi_s(H)| + \sqrt{\tau^2+1}\,|\pi_t(H)| ~.
\end{align}
By Lemma \ref{lem_Jac2} and Lemma \ref{lem_linear},
\begin{align*}
&\quad (\det \dd\Phi)(x,y)\cdot 1_{A}(x,y) \\
&\leq \left( \frac{c_0(c_f - f) - \ip{H(x)}{y}}{n} \right)^n\cdot 1_{A}(x,y) \\
&\leq n^{-n} { \left(\left( c_0(c_f - f) + |\pi_{N^+_x}(H)|\,|\pi_{N^+_x}(y)| + |\pi_{N^-_x}(H)|\,|\pi_{N^-_x}(y)| \right)_+\right)^n } \\
&\leq n^{-n} { \left(\left( c_0(c_f - f) + |\pi_{s}(H)|\,|\pi_{N^+_x}(y)| + (\sqrt{\tau^2-1}|\pi_{s}(H)|+\tau|\pi_t(H)|)|\pi_{N^-_x}(y)| \right)_+\right)^n } ~.
\end{align*}
For brevity, denote $|\pi_{s}(H)|$ by $h_1(x)$ and $\sqrt{\tau^2-1}|\pi_{s}(H)|+\tau|\pi_t(H)|$ by $h_2(x)$.  In this case, $|y|^2 = |\pi_{N^+_x}(y)|^2 - |\pi_{N^-_x}(y)|^2$.  The region of the inner integral of \eqref{ineq2} is contained in
\begin{align*}
 \left\{ y\in N_x\Sm : |\pi_{N^+_x}(y)| < \frac{c_0}{\tau} , |\pi_{N^+_x}(y)|^2+|\nabla^\Sm u(x)|^2 < |\pi_{N^-_x}(y)|^2 < |\pi_{N^+_x}(y)|^2+|\nabla^\Sm u(x)|^2+\vep^2 \right\} ~.
\end{align*}
It follows that the inner integral of \eqref{ineq2} is no greater than
\begin{align*}
\frac{\vol(\pl B^m)\vol(\pl B^k)}{2\,n^n}\int_0^{\frac{c_0}{\tau}}\int_{s^2+|\nabla^\Sm u(x)|^2}^{s^2+|\nabla^\Sm u(x)|^2+\vep^2}
\left(\left( c_0(c_f - f) + h_1\,s + h_2\,t^{\frac{1}{2}} \right)_+\right)^n s^{m-1}t^{\frac{k-2}{2}}\dd t\,\dd s
\end{align*}
where $s = |\pi_{N^+_x}(y)|$ and $t = |\pi_{N^-_x}(y)|^2$.

Since $k\geq2$, the limit of the above integral over $\vep^2$ as $\vep\to0$ is
\begin{align*}
&\quad \int_0^{\frac{c_0}{\tau}} \left(\left( c_0(c_f - f) + h_1\,s + h_2\sqrt{s^2+|\nabla^\Sm u(x)|^2} \right)_+\right)^n (s^2+|\nabla^\Sm u(x)|^2)^{\frac{k-2}{2}} s^{m-1} \dd s \\
&\leq \left(\left( c_0(c_f - f) + h_1\frac{c_0}{\tau} + h_2\sqrt{(\frac{c_0}{\tau})^{2}+|\nabla^\Sm u(x)|^2} \right)_+\right)^n \left( (\frac{c_0}{\tau})^{2}+|\nabla^\Sm u(x)|^2 \right)^{\frac{k-2}{2}}\cdot \frac{1}{m}(\frac{c_0}{\tau})^m ~.
\end{align*}
At any $x\in U$, it follows from $|\nabla^\Sm u(x)| < c_0$ that
\begin{align*}
(\frac{c_0}{\tau})^{2}+|\nabla^\Sm u(x)|^2 &\leq (\frac{c_0}{\tau})^2 (\tau^2+1)\quad\text{and} \\
c_0(c_f - f) + h_1\frac{c_0}{\tau} + h_2\sqrt{(\frac{c_0}{\tau})^{2}+|\nabla^\Sm u(x)|^2} &\leq c_0(c_f - f + f) = c_0\,c_f ~.
\end{align*}

With these estimates, \eqref{ineq2} leads to
\begin{align*}
&\quad \frac{k\,\vol(B^k)\vol(\pl B^{n+m})}{2(n+m+k-2)} \left(c_0(\tau - \sqrt{\tau^2-1})\right)^{n+m+k-2}\\
&\leq \frac{\vol(\pl B^m)\vol(\pl B^k)}{2\,m\,n^n} (c_0\,c_f)^n (\frac{c_0}{\tau})^{k-2}(\tau^2+1)^{\frac{k-2}{2}}(\frac{c_0}{\tau})^m\vol(\Sm) ~.
\end{align*}
After some simple manipulations, this is exactly the inequality asserted in Theorem \ref{mthm2}.

\section{A remark on the constant} \label{sec_const}

In the Euclidean isoperimetric inequality \eqref{E_isop}, the constant depends only on the dimension of the minimal submanifold, and there is a conjectural sharp constant \eqref{E_const}.  One may be wondering
\begin{enumerate}
\item whether it is possible to remove the dependence of $\tau$ in Theorem \ref{mthm1};
\item intuitively the volume is ``smaller" in the Minkowski space, and maybe the constant \eqref{E_const} could do the job.
\end{enumerate}
The observation below implies that (2) cannot be true, and also suggests that it might be impossible to have the constant solely depends on the dimensions.

For a disk type domain of a minimal surface in $\BR^n$, the sharp isoperimetric inequality, $A\leq \frac{1}{4\pi}L^2$, is known.  Here, $A$ stands for the area of the domain, and $L$ stands for the length of its boundary.

Now, consider a domain of a spacelike, maximal surface in $\BR^{2,1}$.  Note that there are a plethora of such surfaces; see for instance \cite{ref_K} and \cite{ref_ACM}.  By $\BR^{2,1}\hookrightarrow\BR^{2,2}$, Theorem \ref{mthm1} says that
\begin{align*}
A &\leq \frac{(\tau + \sqrt{\tau^2 - 1})^2}{4\pi} L^2 ~.
\end{align*}
Since the normal vector is timelike, the Gauss equation implies that the Gauss curvature of a spacelike, maximal surface in $\BR^{2,1}$ is always non-negative.  However, it is known that the constant for the isoperimetric inequality becomes larger for surfaces with non-negative Gauss curvature.  One can consider a geodesic ball at some point $p$ with radius $\rho<\!\!<1$.  By using the geodesic polar coordinate, one finds that
\begin{align*}
\frac{4\pi A}{L^2} = 1 + \frac{K_p}{4}\rho^2 + o(\rho^3) ~.
\end{align*}
There are also some studies on the isoperimetric inequality for non-negatively curved surfaces.  According to \cite{ref_F,ref_H},
\begin{align*}
A &\leq \frac{1}{4\pi - 2\int\!\!\int K\dd\sm} L^2 ~,
\end{align*}
where the total Gaussian curvature on the domain is a defect.  In \cite{ref_BdC}, this inequality is proved to be sharp.

To sum up, $\frac{1}{4\pi}$ does not work for spacelike, maximal surface in $\BR^{2,1}$.  It also suggests that to remove the dependence on $\tau$, one may have to introduce certain dependence on the intrinsic curvature of the minimal submanifold.

\begin{bibdiv}
\begin{biblist}

\bib{ref_ACM}{article}{
   author={Al\'{\i}as, Luis J.},
   author={Chaves, Rosa M. B.},
   author={Mira, Pablo},
   title={Bj\"{o}rling problem for maximal surfaces in Lorentz-Minkowski space},
   journal={Math. Proc. Cambridge Philos. Soc.},
   volume={134},
   date={2003},
   number={2},
   pages={289--316},
}

\bib{ref_BdC}{article}{
   author={Barbosa, Jo\~{a}o Lucas},
   author={do Carmo, Manfredo},
   title={A proof of a general isoperimetric inequality for surfaces},
   journal={Math. Z.},
   volume={162},
   date={1978},
   number={3},
   pages={245--261},
}

\bib{ref_Brendle}{article}{
   author={Brendle, Simon},
   title={The isoperimetric inequality for a minimal submanifold in Euclidean space},
   journal={},
   volume={},
   date={},
   number={},
   pages={},
	 eprint={arXiv:1907.09446},
   url={http://arxiv.org/abs/1907.09446},
   status={preprint},

}

\bib{ref_Ca}{article}{
   author={Cabr\'{e}, Xavier},
   title={Elliptic PDE's in probability and geometry: symmetry and regularity of solutions},
   journal={Discrete Contin. Dyn. Syst.},
   volume={20},
   date={2008},
   number={3},
   pages={425--457},
}

\bib{ref_Ch}{article}{
   author={Choe, Jaigyoung},
   title={Isoperimetric inequalities of minimal submanifolds},
   conference={
      title={Global theory of minimal surfaces},
   },
   book={
      series={Clay Math. Proc.},
      volume={2},
      publisher={Amer. Math. Soc., Providence, RI},
   },
   date={2005},
   pages={325--369},
}

\bib{ref_F}{article}{
   author={Fiala, F.},
   title={Le probl\`eme des isop\'{e}rim\`etres sur les surfaces ouvertes \`a courbure positive},
   language={French},
   journal={Comment. Math. Helv.},
   volume={13},
   date={1941},
   pages={293--346},
}

\bib{ref_GT}{book}{
   author={Gilbarg, David},
   author={Trudinger, Neil S.},
   title={Elliptic partial differential equations of second order},
   series={Classics in Mathematics},
   publisher={Springer-Verlag, Berlin},
   date={2001},
   pages={xiv+517},
}

\bib{ref_H}{article}{
   author={Huber, Alfred},
   title={On the isoperimetric inequality on surfaces of variable Gaussian curvature},
   journal={Ann. of Math. (2)},
   volume={60},
   date={1954},
   pages={237--247},
}

\bib{ref_K}{article}{
   author={Kobayashi, Osamu},
   title={Maximal surfaces in the $3$-dimensional Minkowski space $L^{3}$},
   journal={Tokyo J. Math.},
   volume={6},
   date={1983},
   number={2},
   pages={297--309},
}

\bib{ref_Yau}{article}{
   author={Yau, Shing Tung},
   title={Problem section},
   conference={
      title={Seminar on Differential Geometry},
   },
   book={
      series={Ann. of Math. Stud.},
      volume={102},
      publisher={Princeton Univ. Press, Princeton, N.J.},
   },
   date={1982},
   pages={669--706},
}

\end{biblist}
\end{bibdiv}

\end{document}